\documentclass{amsart}
\usepackage[T1]{fontenc}
\usepackage{amsmath,amsthm,amssymb,amsfonts,enumerate,mathrsfs}
\newtheorem{theorem}{Theorem}
\newtheorem{lemma}{Lemma}

\theoremstyle{definition}
\newtheorem{definition}{Definition}

\newtheorem{remark}{Remark}

\newtheorem{question}{Question}

\newcommand{\N}{\mathbb{N}}

\title{The Primes Are $2$-Accessible}
\author{Oscar Quester}
\date{\today}

\begin{document}

\begin{abstract}

We prove that the set of positive integers having between $1$ and $n$ prime factors, counted with multiplicity, has degree of accessibility $2^n$. In particular, the case $n=1$ answers a question of Landman and Robertson asking whether the set of prime numbers is $2$-accessible.

\end{abstract}
\maketitle

\section{Introduction}

An \textit{$r$-coloring} of $\mathbb{N}$ is a function from $\mathbb{N}$ to a set with $r$ elements, usually $\{1,\dots,r\}$. A set $A \subseteq \mathbb{N}$ is said to be \textit{monochromatic} with respect to a coloring $\chi$ if $\chi$ is constant on $A$, that is, if $|\chi(A)|=1$. In this paper, we study the Ramsey properties of sequences with prescribed gaps.

\begin{definition}

Let $D \subseteq \N$. A $k$-term sequence of positive integers $$x_1,x_2,\dots,x_k$$ with $x_{i+1}-x_i \in D$ for all $1 \leq i \leq k-1$ is called a $k$-term \textit{$D$-diffsequence}.

\end{definition}

\begin{definition}
    
A set $D \subseteq \N$ is called \textit{$r$-accessible} if, for every $r$-coloring of $\N$, there exist arbitrarily long monochromatic sequences $x_1,x_2,\dots,x_k$ with $x_{i+1}-x_i \in D$; that is, there exist monochromatic $k$-term $D$-diffsequences for every $k \geq 1$. If $D$ is $r$-accessible for every $r \in \N$, we say that $D$ is \textit{accessible}.
    
\end{definition}

Given a set $D \subseteq \N$, we define the \textit{degree of accessibility} of $D$, denoted $\operatorname{doa}(D)$, to be the greatest positive integer $r$ for which $D$ is $r$-accessible. If $D$ is $r$-accessible for every $r \in \N$, we write $\operatorname{doa}(D)=\infty$. The study of accessibility was initiated by Landman and Robertson \cite{landman2007avoiding}. This property has since been studied further in \cite{ardal2008ramsey,clifton2024new,farhangi2021distance,jungic2005conjecture,landman2014ramsey,landmanramsey,landman2010avoiding,oscar2025,wesley2022improved}.

For $D \subseteq \mathbb{N}$ and a positive integer $c$, let $$ D+c=\{d+c : d\in D\} \quad \text{and} \quad D-c=\{d-c : d \in D,\ d>c\}. $$ Equivalently, $D-c=\{n \in \mathbb{N} :n+c \in D\}$. Let $\mathbb{P}=\{2,3,5,7,\dots\}$ denote the set of prime numbers. For $D \subseteq \mathbb{N}$, let $$D^{(n)}=\{d_1\cdots d_k : 1 \leq k \leq n \text{ and } d_1,\dots,d_k \in D\}.$$ Thus, $$ \mathbb{P}^{(n)} = \{p_1 \cdots p_k : 1 \leq k \leq n \text{ and } p_1,\dots,p_k \in \mathbb{P}\}, $$ that is, $\mathbb{P}^{(n)}$ is the set of positive integers having between $1$ and $n$ prime factors, counted with multiplicity.

A basic periodic $3$-coloring of $\mathbb{N}$ shows that the primes are not $3$-accessible \cite{landman2007avoiding}. In addition, the authors of \cite{landman2007avoiding} prove that $\mathbb{P}+c$ is $2$-accessible for every odd $c \in \mathbb{N}$. Question $7$ of \cite{landman2007avoiding}, along with Research Question $10.2$ of \cite{landman2014ramsey}, asks whether $\mathbb{P}$ is $2$-accessible. The $2$-accessibility of the primes is also studied in \cite{landman2010avoiding}, where the authors note that every periodic $2$-coloring of $\mathbb{N}$ contains arbitrarily long monochromatic $\mathbb{P}$-diffsequences. Using a Szemerédi-type theorem for the shifted primes $\mathbb{P}-1$ and $\mathbb{P}+1$ due to Frantzikinakis, Host, and Kra \cite{frantzikinakis2013polynomial}, we answer this question affirmatively.

\begin{theorem} \label{thm:D}

Let $N \geq 2$. Then $$\operatorname{doa}\left(\bigcup_{t=1}^{N-1}t\mathbb{P}\right)=N.$$

\end{theorem}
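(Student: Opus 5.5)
The proof has two halves: an explicit $(N+1)$-coloring with no long monochromatic diffsequences, giving $\operatorname{doa}\le N$, and an argument that every $N$-coloring has them, giving $\operatorname{doa}\ge N$. Write $D=\bigcup_{t=1}^{N-1}t\mathbb{P}$.

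\emph{Upper bound.} I would construct a periodic $(N+1)$-coloring $\chi(x)=f(x \bmod m)$, generalizing the periodic $3$-coloring of \cite{landman2007avoiding} that handles $N=2$. The plan is to pick a modulus $m$ built from small primes (for instance $m=(N+1)\cdot\operatorname{lcm}(1,\dots,N-1)$, or a suitable prime $q$). Then every $tp$ with $p\nmid m$ lies in a controlled set of residues modulo $m$. The $N+1$ color classes should be unions of residue classes within which no difference $x-y$ is congruent to such a residue. Any monochromatic $D$-diffsequence must then use only the finitely many differences $tp$ with $p\mid m$. To rule out long sequences built from these finitely many exceptional differences, I would refine the coloring on large blocks. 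I have not checked this construction in detail, so finding the right modulus and partition is the part of this half most likely to need adjustment.

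\emph{Lower bound.} Fix an $N$-coloring $\chi$. For each $b$, look at the colors of $b,b+1,\dots,b+N-1$. Since there are $N$ points and only $N$ colors, this pigeonhole step alone does not force a repeat. I would therefore use the $N+1$ points $b,\dots,b+N$, which yield offsets $i<i+t$ with $\chi(b+i)=\chi(b+i+t)$ and $1\le t\le N$. The pigeonhole needs to be calibrated so that $t\le N-1$, presumably by shrinking the window or by treating the case $t=N$ separately. Record the pattern $\pi(b)=(\chi(b),\dots,\chi(b+N))$. This is a finite coloring of $\mathbb{N}$, so some pattern class $A$ has positive upper density, and inside $A$ we may fix a common pair $(i,t)$.

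Next I would look for points $b_0,b_1,\dots,b_{k}\in A$ with $b_{2j+1}-b_{2j}=t(p-1)$ and $b_{2j+2}-b_{2j+1}=t(p+1)$, all for one prime $p$. Then the sequence
\[
b_0+i,\; b_1+i+t,\; b_2+i,\; b_3+i+t,\;\dots
\]
is monochromatic. Its consecutive differences are $t(p-1)+t=tp$ and $t(p+1)-t=tp$, so it is a $k$-term $D$-diffsequence. Equivalently, we need $b\in A$ such that $b+2jtp\in A$ and $b+2jtp+t(p-1)\in A$ for $0\le j< k/2$. Setting $n=p-1$, this is a configuration $b+\phi_\ell(n)$ with $\phi_\ell(n)\in\{2jt(n+1),\,2jt(n+1)+tn\}$. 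These are linear polynomials in $n$, but they do not all vanish at $0$.

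\emph{Main obstacle.} The hardest step is to extract this configuration from the Frantzikinakis--Host--Kra theorem \cite{frantzikinakis2013polynomial}, which gives polynomial patterns $b+q_\ell(p-1)$ with $q_\ell(0)=0$ in sets of positive density. My plan is to absorb the constant terms $2jt$ by translating. One replaces $A$ by the positive-density set $A'=\bigcap_j (A-2jt)$, arranged by pigeonholing patterns over a longer window so that the shifts are built into the pattern. One then applies FHK to $A'$ with the family $q(n)\in\{2jtn,\,(2j+1)tn\}$, all of which vanish at $n=0$. I would first try to enlarge the recorded pattern $\pi$ to cover the window $b,\dots,b+2kt+N$, so that membership of $b+2jt$ is automatic; if this circularity fails, I would instead apply a multidimensional form of FHK to the finite coloring $\pi$ directly.
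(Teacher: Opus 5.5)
Your proposal has genuine gaps in both halves.

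\textbf{Lower bound.} The main step fails because the single-prime configuration you want does not exist in general. So neither the translation trick nor a multidimensional form of the Frantzikinakis--Host--Kra theorem can produce it. Suppose $A$ lies in one residue class modulo some $M\nmid 2t$. Then $b_1-b_0=t(p-1)$ and $b_2-b_1=t(p+1)$ force $M\mid t(p-1)$ and $M\mid t(p+1)$, hence $M\mid 2t$, a contradiction. For a concrete case, take $N=2$ and let $\chi(x)=1$ if $x\equiv 2 \pmod 3$ and $\chi(x)=0$ otherwise. The pattern classes with a repeat at distance $1$ are exactly the residue classes $0$ and $2$ modulo $3$, so you would need $3\mid p-1$ and $3\mid p+1$. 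Indeed $A\cap(A-2)=\varnothing$ here, which is precisely the circularity you were worried about.

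The missing idea is to use two independent primes:
\begin{enumerate}
\item Use recurrence of $\mathbb{P}-1$ to find $n_1$ such that $A_1=\bigcap_{j\le K}(A-jtn_1)$ has positive Banach density.
\item Apply recurrence of $\mathbb{P}+1$ to $A_1$ to find $n_2$. All shifts $jtn$ vanish at $0$, so the theorem applies. This gives a grid $a+i_1tn_1+i_2tn_2\in A$ for $0\le i_1,i_2\le K$.
\item Walk alternately through $a+jt(n_1+n_2)$ with offset $i$ and $a+(j+1)tn_1+jtn_2$ with offset $i+t$. The steps alternate between $t(n_1+1)=tp$ and $t(n_2-1)=tq$, both in $D$.
\end{enumerate}
This is the paper's Lemma~\ref{lem:density-grid} together with Case 2.

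Separately, the case $t=N$ cannot be handled by shrinking the window, since the colouring $x\bmod N$ has no repeat at distance less than $N$. You need the paper's dichotomy. Consider the set of $m$ with $\chi(m)=\chi(m+\ell)$ for some $1\le\ell\le N-1$. Either this set is syndetic, and then some $X_{\ell,c}$ has positive Banach density. Or it has arbitrarily long gaps; on such a gap $\chi$ is $N$-periodic, giving long monochromatic progressions with difference $N\in D$.

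\textbf{Upper bound.} This half is not proved: no colouring is specified, and for $N\ge 3$ it is the delicate half. Colouring by $x\bmod N$, as your residue idea suggests, leaves only monochromatic steps $jN\in D$. These force $p\mid N$ and $j\le N-1$, but they can exceed $N$ (e.g.\ $6=2\cdot 3\in D$ for $N=3$). So the single extra colour must periodically occupy runs of consecutive terms in every progression $c+N\mathbb{Z}$, long enough that such steps cannot jump over them. You must then show that this extra class, which now contains many nearby points, carries no long $D$-diffsequence. The paper does this with three ingredients:
\begin{enumerate}
\item the blocks $B_c$;
\item the period $T=N^2s!$;
\item the observation that for colour $N$ one has $s_{i+1}-s_i=t_ip_i$ exactly, so the residues $s_i$ increase strictly inside the finite set $B$.
\end{enumerate}
Your sketch, with its tentative modulus $(N+1)\operatorname{lcm}(1,\dots,N-1)$ and an unspecified refinement on large blocks, does not yet contain such a construction.
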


The fact that the primes are $2$-accessible follows immediately by letting $N=2$. In addition, we obtain a slightly more general result.

\begin{theorem}\label{thm:primes}

For every $n \geq 1$, we have $$\operatorname{doa}(\mathbb{P}^{(n)})=2^n.$$ In particular, when $n=1$, we obtain $$\operatorname{doa}(\mathbb{P})=2,$$ that is, the primes are $2$-accessible.
    
\end{theorem}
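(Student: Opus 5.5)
The proof has two halves: a lower bound $\operatorname{doa}(\mathbb{P}^{(n)})\ge 2^n$ deduced from Theorem~\ref{thm:D}, and an upper bound $\operatorname{doa}(\mathbb{P}^{(n)})<2^n+1$ from an explicit periodic coloring. The lower bound is straightforward; the upper bound is where I expect the work, and the coloring below is a conjectured candidate rather than a checked one.

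\emph{Lower bound.} If $1\le t\le 2^n-1$, then $t$ has at most $n-1$ prime factors counted with multiplicity, because every prime is at least $2$. Hence $tp$ has between $1$ and $n$ prime factors for every prime $p$, and
\[
\bigcup_{t=1}^{2^n-1} t\mathbb{P}\subseteq \mathbb{P}^{(n)}.
\]
Accessibility passes to supersets, since a $D$-diffsequence is also a $D'$-diffsequence whenever $D\subseteq D'$. Applying Theorem~\ref{thm:D} with $N=2^n$ then gives that $\mathbb{P}^{(n)}$ is $2^n$-accessible. For $n=1$ this is exactly Theorem~\ref{thm:D} with $N=2$.

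\emph{Upper bound.} I need a $(2^n+1)$-coloring of $\mathbb{N}$ with no arbitrarily long monochromatic $\mathbb{P}^{(n)}$-diffsequences, and I would look for a periodic one. The arithmetic fact it should exploit is that every $d\in\mathbb{P}^{(n)}$ has $v_2(d)\le n$. Moreover, $v_2(d)=n$ forces $d=2^n$ exactly, since any odd factor would make $n+1$ prime factors. So modulo $2^{n+1}$, every allowed gap is nonzero, and the only gap $\equiv 2^n \pmod{2^{n+1}}$ is the single integer $2^n$.

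First step: group the residues mod $2^{n+1}$ into the $2^n$ pairs $\{r,r+2^n\}$ and give each pair one color. Inside a color class, consecutive terms then differ by $0$ or $2^n$ mod $2^{n+1}$. By the fact above, the only possible step is exactly $2^n$, so every monochromatic diffsequence is an arithmetic progression with common difference $2^n$.

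Second step: use the extra $(2^n+1)$-st color to cut these progressions to bounded length. The idea is to recolor a sparse periodic set, say residues in a fixed pattern modulo $2^{n+1}M$ for a suitable $M$, so that every progression with difference $2^n$ meets the new color within a bounded number of steps. At the same time, the new color class itself must have all internal differences outside $\mathbb{P}^{(n)}$, or at least must not support long diffsequences. For $n=1$ this should recover the known $3$-coloring showing that the primes are not $3$-accessible \cite{landman2007avoiding}.

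\emph{Main obstacle.} The hard part is the choice of $M$ and the pattern in the second step. Recoloring changes the differences available inside the old classes, so I must recheck that the truncated classes still admit only steps of $2^n$. I would also need the new class to avoid long chains; for instance, its differences could be forced to be $\equiv 0 \pmod{2^{n+1}}$, which is impossible for gaps in $\mathbb{P}^{(n)}$. My first attempt would take the new class to be a single residue class mod $2^{n+1}M$ and verify the constraints by a direct residue computation.
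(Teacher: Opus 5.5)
Your lower bound is correct and is the paper's argument: the inclusion $\bigcup_{t=1}^{2^n-1}t\mathbb{P}\subseteq\mathbb{P}^{(n)}$ combined with Theorem~\ref{thm:D} for $N=2^n$. Your first step of the upper bound is also right. Pairing residues as $\{r,r+2^n\}$ is just coloring by $x \bmod 2^n$. Since $2^n$ is the only multiple of $2^n$ in $\mathbb{P}^{(n)}$, every monochromatic diffsequence in those colors is a progression with difference exactly $2^n$. You also need not recheck this after recoloring: any subset of a residue class mod $2^n$ still admits only steps of $2^n$.

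\textbf{The gap is the second step.} You leave it unspecified, and the mechanism you do suggest cannot work. A progression $x, x+2^n, x+2\cdot 2^n,\dots$ never leaves the residue class of $x$ modulo $2^n$. So the extra color has to meet all $2^n$ residue classes mod $2^n$, not just one. A single residue class mod $2^{n+1}M$ lies inside one residue class mod $2^n$. So does any set whose internal differences are all $\equiv 0 \pmod{2^{n+1}}$. Such a set truncates only one of the $2^n\ge 2$ old color classes. The remaining classes keep arbitrarily long progressions with difference $2^n\in\mathbb{P}^{(n)}$.

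\textbf{The missing idea} is to block with an odd modulus rather than a power of $2$: give the extra color to the multiples of $3^{n+1}$. No element of $\mathbb{P}^{(n)}$ is divisible by $3^{n+1}$, so this class has no two-term $\mathbb{P}^{(n)}$-diffsequence at all. Since $\gcd(2^n,3^{n+1})=1$, any $3^{n+1}$ consecutive terms of a progression with difference $2^n$ run through every residue mod $3^{n+1}$. Hence they hit a multiple of $3^{n+1}$, which has the extra color. The resulting coloring is $\chi(x)=x\bmod 2^n$ if $3^{n+1}\nmid x$ and $\chi(x)=2^n$ if $3^{n+1}\mid x$. It has no monochromatic $3^{n+1}$-term $\mathbb{P}^{(n)}$-diffsequence, and this is the paper's upper bound.

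This coloring does fit your framework with $M=3^{n+1}$. However, the pattern is a union of $2^{n+1}$ residue classes mod $2^{n+1}3^{n+1}$, not a single one. Without such a choice, your proposal establishes only $\operatorname{doa}(\mathbb{P}^{(n)})\ge 2^n$.
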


\begin{definition}

We say that $A \subseteq \mathbb{N}$ has \textit{positive upper Banach density} if $$d^*(A)=\limsup_{|I| \to \infty}\frac{|A \cap I|}{|I|}>0,$$ where the limsup is taken over all intervals $I \subseteq \mathbb{N}$. We say that $E \subseteq \mathbb{N}$ is a set of \textit{density recurrence} if, for every set $A \subseteq \mathbb{N}$ with $d^*(A)>0$, $$E \cap \{n \in \mathbb{N} : d^*\left(A \cap (A-n) \cap \cdots \cap (A-kn)\right)>0\} \neq \varnothing$$ for every $k \geq 1$.
    
\end{definition}

Frantzikinakis, Host, and Kra proved a polynomial Szemerédi-type theorem for the shifted primes $\mathbb{P}-1$ and $\mathbb{P}+1$, which implies the following.

\begin{theorem}[\cite{frantzikinakis2013polynomial}, Theorem 1.1]\label{thm:frant}
The shifted primes $\mathbb{P}-1$ and $\mathbb{P}+1$ are sets of density recurrence. 
\end{theorem}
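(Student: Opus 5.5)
Theorem~\ref{thm:frant} is not proved from scratch here: it is deduced from the multiple recurrence theorem of Frantzikinakis, Host, and Kra via the Furstenberg correspondence principle. I would take as given their ergodic statement, in the following form. Let $(X,\mathcal{B},\mu,T)$ be an invertible measure preserving system, let $B\in\mathcal{B}$ with $\mu(B)>0$, and let $p_1,\dots,p_k\in\mathbb{Z}[x]$ satisfy $p_i(0)=0$. Then
$$\liminf_{N\to\infty}\frac{1}{\pi(N)}\sum_{p\le N,\ p\in\mathbb{P}}\mu\bigl(B\cap T^{-p_1(p-1)}B\cap\cdots\cap T^{-p_k(p-1)}B\bigr)>0,$$
and the same holds with $p-1$ replaced by $p+1$. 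I would apply this with the linear polynomials $p_i(x)=ix$ for $1\le i\le k$. It then follows that for each shift $\varepsilon\in\{-1,+1\}$ there is a prime $p$ (in fact a set of primes of positive relative density) with
$$\mu\bigl(B\cap T^{-(p+\varepsilon)}B\cap\cdots\cap T^{-k(p+\varepsilon)}B\bigr)>0.$$

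Next I would invoke Furstenberg's correspondence principle in its upper Banach density form. Given $A\subseteq\mathbb{N}$ with $d^*(A)>0$, it supplies an invertible system $(X,\mu,T)$ and a set $B$ with $\mu(B)=d^*(A)$ such that, for all integers $n_1,\dots,n_k$,
$$d^*\bigl(A\cap(A-n_1)\cap\cdots\cap(A-n_k)\bigr)\ \ge\ \mu\bigl(B\cap T^{-n_1}B\cap\cdots\cap T^{-n_k}B\bigr).$$
The system can be taken to be the shift on $\{0,1\}^{\mathbb{Z}}$, with $\mu$ a weak-$*$ limit of averages along intervals that realize $d^*(A)$. Combining this with the first step, with $n_i=i(p+\varepsilon)$, gives some $n=p+\varepsilon\in\mathbb{P}+\varepsilon$ for which $d^*\bigl(A\cap(A-n)\cap\cdots\cap(A-kn)\bigr)>0$.

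Finally, I would check that $n$ is a positive integer, so that it lies in the sets $\mathbb{P}-1$ and $\mathbb{P}+1$ as the paper defines them. For $\mathbb{P}-1$ the only issue is the prime $2$, which gives $n=1$; this is still positive, and in any case infinitely many primes are available. Since $k$ and $A$ were arbitrary, both shifted prime sets are sets of density recurrence.

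The main obstacle is entirely the input of \cite{frantzikinakis2013polynomial}. Their proof shows that the averages over primes agree, up to $o(1)$, with weighted averages over integers in residue classes $Wn+b$ (the $W$-trick). This comparison uses the Green--Tao results: Gowers uniformity of the $W$-tricked von Mangoldt function, together with a nilsequence/characteristic-factor reduction. Positivity of the integer averages then comes from the polynomial Szemer\'edi theorem of Bergelson--Leibman. I would use this as a black box, and the remaining correspondence argument is routine.
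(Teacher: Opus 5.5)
Your proposal is correct and follows essentially the paper's route. The paper gives no argument of its own but quotes Theorem 1.1 of Frantzikinakis, Host, and Kra, which is exactly the combinatorial statement they derive, via Furstenberg's correspondence principle, from the averages-over-primes ergodic theorem you invoke; you simply make that routine step explicit (with $n_i=i(p\pm1)$). The only quibble concerns your informal sketch of the black box: the $W$-trick needs a lower bound for $\liminf_{M\to\infty}\frac{1}{M}\sum_{n\le M}\mu\bigl(B\cap T^{-p_1(Wn)}B\cap\cdots\cap T^{-p_k(Wn)}B\bigr)$ that is uniform in $W$, not just positivity for each fixed $W$ from Bergelson--Leibman, but this has no bearing on your deduction.
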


\begin{remark} 

In fact, Frantzikinakis, Host, and Kra proved a much more general result. Namely, if $\ell,m \in \mathbb{N}$ and $\vec{q}_1,\dots,\vec{q}_m:\mathbb{Z}\to \mathbb{Z}^{\ell}$ 
are polynomials with $\vec{q}_i(0)=\vec{0}$ for $i=1,\dots,m$, then for any $E\subseteq \mathbb{Z}^{\ell}$ with positive upper Banach density, 
that is, $d^*(E)>0$, the set of integers $n$ such that
$$
d^*\left(
E\cap (E-\vec{q}_1(n))\cap \cdots \cap (E-\vec{q}_m(n))
\right)>0
$$
has nonempty intersection with both $\mathbb{P}-1$ and $\mathbb{P}+1$.
    
\end{remark}

The following simple lemma, which is an iterative application of the definition of density recurrence, will be useful.

\begin{lemma}\label{lem:density-grid}

Let $E_1,\dots,E_n \subseteq \mathbb{N}$ be sets of density recurrence. Then for any set $A$ of positive upper Banach density and every $k \geq 1$, there exist $a \in A$ and $e_i \in E_i$ for $1 \leq i \leq n$ such that $$ \{a+i_1e_1+\cdots+i_ne_n : 0 \leq i_j \leq k \text{ for } 1 \leq j \leq n\} \subseteq A. $$

\end{lemma}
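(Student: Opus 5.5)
We argue by induction on $n$, proving the following stronger statement: for every set $A$ with $d^*(A)>0$ and every $k\ge 1$, there are $e_i\in E_i$ such that
$$B_n := A\cap \bigcap_{0\le i_1,\dots,i_n\le k} \bigl(A-(i_1e_1+\cdots+i_ne_n)\bigr)$$
has positive upper Banach density. Any $a\in B_n$ then gives the conclusion of the lemma. Indeed, the term $i_1=\cdots=i_n=0$ shows $a\in A$, and in general $a\in A-s$ means $a+s\in A$.

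The base case $n=1$ is exactly the definition of density recurrence for $E_1$ applied to $A$. That definition gives $e_1\in E_1$ with
$$d^*\bigl(A\cap(A-e_1)\cap\cdots\cap(A-ke_1)\bigr)>0.$$

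For the inductive step, assume we have found $e_1,\dots,e_{n-1}$ such that $A':=B_{n-1}$ satisfies $d^*(A')>0$. Apply the definition of density recurrence for $E_n$ to $A'$ with the same $k$. This gives $e_n\in E_n$ with
$$d^*\bigl(A'\cap(A'-e_n)\cap\cdots\cap(A'-ke_n)\bigr)>0.$$
Now unwind the definitions. Suppose $x$ lies in this set and $0\le i_n\le k$. Then $x+i_ne_n\in A'$, so $x+i_ne_n+\sum_{j<n} i_je_j\in A$ for all $0\le i_j\le k$. Hence $x\in B_n$, so $B_n$ contains a set of positive upper Banach density. This completes the induction.

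The only point needing care is that the recurrence property must be applied to a set of positive density, not merely a nonempty one. This is why we carry positive density of the intersection $B_n$ through the induction, rather than just the existence of a single point $a$. The definition of density recurrence supplies exactly this positive-density conclusion, so no further obstacle arises. We also need the translate identity $(A-s)-t=A-(s+t)$ and the fact that intersections distribute over translates. Both are immediate from $D-c=\{m:m+c\in D\}$.
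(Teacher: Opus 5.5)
Your proof is correct and matches the paper's argument. The paper also builds nested sets $A_j=A_{j-1}\cap(A_{j-1}-e_j)\cap\cdots\cap(A_{j-1}-ke_j)$, applies the recurrence of $E_j$ to the positive-density set $A_{j-1}$ at each step, and picks $a\in A_n$; the only cosmetic difference is that you use just the inclusion of this set into $B_n$, whereas the paper asserts equality with the full intersection.
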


\begin{proof}

Since $E_1$ is a set of density recurrence and $d^*(A)>0$, $$E_1 \cap\{n \in \mathbb{N} : d^*(A \cap (A-n) \cap \cdots \cap (A-kn))>0\} \neq \varnothing.$$ Let $e_1 \in E_1$ be such that $$d^*(A \cap (A-e_1) \cap \cdots \cap (A-ke_1))>0$$ and let $A_1=A \cap (A -e_1) \cap \cdots \cap (A-ke_1)$. Since $E_2$ is a set of density recurrence and $d^*(A_1)>0$, $$E_2 \cap \{n \in \mathbb{N} : d^*(A_1 \cap (A_1-n) \cap \cdots \cap (A_1-kn))>0\} \neq \varnothing.$$ Let $e_2 \in E_2$ be such that $$d^*(A_1 \cap (A_1-e_2) \cap \cdots \cap (A_1-ke_2))>0$$ and let $$A_2=A_1 \cap (A_1-e_2) \cap \cdots \cap (A_1-ke_2).$$ We continue this process inductively to obtain $$A_n=A_{n-1} \cap (A_{n-1}-e_n) \cap \cdots \cap (A_{n-1}-ke_n)$$ with $d^*(A_n)>0$. By construction, $$A_n=\bigcap_{0 \leq i_1,\dots,i_n \leq k}(A-i_1e_1-i_2e_2 - \cdots - i_ne_n).$$ Since $d^*(A_n)>0$, choose $a \in A_n$. Then $$ \{a+i_1e_1+i_2e_2+\cdots+i_ne_n : 0 \leq i_j \leq k \text{ for } 1 \leq j \leq n\} \subseteq A, $$ as desired. \end{proof}

We are now ready to prove the main result, Theorem \ref{thm:D}. Before giving the proof, we briefly describe the idea. Given a coloring, we look at the set $X$ of starting points of monochromatic pairs at distances less than $N$. If $X$ has arbitrarily long gaps, then a simple pigeonhole argument forces long monochromatic arithmetic progressions with common difference $N$. If $X$ has bounded gaps, then one of the sets $$X_{\ell,c}=\{m \in \mathbb{N} : \chi(m)=\chi(m+\ell)=c\}$$ has positive upper Banach density. The recurrence of $\mathbb{P}-1$ and $\mathbb{P}+1$ then gives a two-dimensional grid inside $X_{\ell,c}$, from which we extract a long monochromatic diffsequence.

\begin{proof}[Proof of Theorem \ref{thm:D}]

Let $$D_N=\bigcup_{t=1}^{N-1}t\mathbb{P}.$$ Let $\chi : \mathbb{N} \to \{1,\dots,N\}$ be an arbitrary $N$-coloring of $\mathbb{N}$. Define $$X_{\ell,c}=\{n \in \mathbb{N} : \chi(n)=\chi(n+\ell)=c\}$$ for $\ell \in \{1,\dots,N-1\}$ and $c \in \{1,\dots,N\}$. Let $$X=\bigcup_{\ell=1}^{N-1}\bigcup_{c=1}^{N}X_{\ell,c}.$$ We consider two cases.

\medskip 
\noindent \textbf{Case 1.} Suppose $X$ has unbounded gaps. Then, for every $k \geq 1$, there exists an interval $[a,a+kN]$ of integers such that $$X \cap [a,a+kN]=\varnothing.$$ Now, we claim $$\chi(a+(j-1)N)=\chi(a+jN)$$ for all $j \in \{1,\dots,k\}$. Indeed, suppose the endpoint colors are different. Among the $N+1$ integers $$a+(j-1)N,\ a+(j-1)N+1,\dots,\ a+jN$$ two must have the same color. Since the endpoints have different colors, these two equal-colored points are at distance between $1$ and $N-1$. Thus the smaller of them belongs to $X$, contradicting $$X\cap [a,a+kN]=\varnothing.$$ Hence, $$\chi(a)=\chi(a+N)=\cdots=\chi(a+kN).$$ Since $N \in D_N$ (if $p \mid N$, then $N=(N/p)p$ and $N/p\leq N-1$), we have a $(k+1)$-term monochromatic $D_N$-diffsequence. Since $k \geq 1$ was arbitrary, we conclude that if $X$ has unbounded gaps, then $\chi$ admits arbitrarily long monochromatic $D_{N}$-diffsequences.

\medskip
\noindent \textbf{Case 2.} Suppose $X$ has bounded gaps. Hence $X$ is syndetic and thus has positive upper Banach density. Since $X$ is a finite union of the sets $X_{\ell,c}$, there exist $\ell \in \{1,\dots,N-1\}$ and $c \in \{1,\dots,N\}$ such that the set $$X_{\ell,c}=\{m \in \mathbb{N}:\chi(m)=\chi(m+\ell)=c\}$$ has positive upper Banach density. We apply Lemma \ref{lem:density-grid} with $$A=X_{\ell,c}, \quad E_1=\mathbb{P}-1, \quad E_2=\mathbb{P}+1,$$ to conclude that for any $k \geq 1$, there exist $a \in X_{\ell,c}$ and primes $p,q \in \mathbb{P}$ such that $$\{a+i_1(p-1)+i_2(q+1) : 0 \leq i_1,i_2 \leq k\ell \} \subseteq X_{\ell,c}.$$ By definition of $X_{\ell,c}$, this implies $$\chi\left(a+i_1(p-1)+i_2(q+1)+\epsilon\right)=c
$$ for all $0\leq i_1,i_2\leq k\ell$ and $\epsilon\in\{0,\ell\}$. Taking $i_1=i_2=i \ell$ and $\epsilon=0$, we find $$\chi(a+i\ell(p+q) )=c$$ for all $0 \leq i \leq k$. Taking $i_1=(i+1)\ell$, $i_2=i\ell$, and $\epsilon=\ell$, we find $$\chi(a+(i+1)\ell p+i\ell q)=c$$ for all $0 \leq i \leq k-1$. Now let $$x_{2i+1}=a+i\ell(p+q) \quad \text{for } 0 \leq i \leq k$$ and $$x_{2i+2}=a+(i+1)\ell p + i\ell q \quad \text{for } 0 \leq i \leq k-1.$$ Then $$x_1<x_2<\cdots<x_{2k+1}$$ is a $(2k+1)$-term monochromatic sequence with $$x_{2i+2}-x_{2i+1}=\ell p \quad \text{and} \quad x_{2i+3}-x_{2i+2}=\ell q.$$ Since $\ell\leq N-1$, both $\ell p$ and $\ell q$ belong to $D_N$. Hence, we have a $(2k+1)$-term monochromatic $D_{N}$-diffsequence. Since $k \geq 1$ was arbitrary, we conclude that if $X$ has bounded gaps, then $\chi$ admits arbitrarily long monochromatic $D_{N}$-diffsequences. Since $\chi$ was an arbitrary $N$-coloring, we conclude that $D_{N}$ is $N$-accessible, that is, $$\operatorname{doa}(D_{N}) \geq N.$$

We now prove that $D_N$ is not $(N+1)$-accessible. Define $$B_c=\{c+N(c(N-1)+r):0\leq r\leq N-2\}$$ for $c \in \{0,\dots,N-1\}$. Let $$B=\bigcup_{c=0}^{N-1}B_c$$ and let $s=\max B.$ Note that the absolute difference between distinct elements of the same $B_c$ is at least $N$, and the absolute difference between elements of distinct $B_c$'s is at least $N+1$. Let $T=N^2s!$. Since $B\subseteq [0,s]$ and $T=N^2s!>2s$, even after reducing modulo $T$, every nonzero difference of two elements of $B$ still has absolute value at least $N$. We define an $(N+1)$-coloring $\chi : \mathbb{N} \to \{0,\dots,N\}$ by $$\chi(x)=\begin{cases}
    x\bmod N & : x\bmod T \notin B, \\
    N & : x\bmod T \in B.
\end{cases}$$ Suppose $$x_1<x_2<\cdots<x_k$$ is a monochromatic $D_N$-diffsequence under $\chi$. Suppose $\chi(x_1)=c$ for some $c \in \{0,\dots,N-1\}$. Then $N \mid x_{i+1}-x_i$. For each $i$, write $x_{i+1}-x_i=t_i p_i$ with $1\leq t_i\leq N-1$ and $p_i\in\mathbb{P}$. Since the sequence is monochromatic of color $c$, we have $N\mid t_i p_i$. Since $t_i<N$, this forces $p_i\mid N$, and in particular $p_i\leq N$. Hence the positive multiple $x_{i+1}-x_i$ of $N$ is at most $N(N-1)$. Therefore $$x_{i+1}-x_i \in \{N,2N,\dots,(N-1)N\}.$$ Put $y_i=(x_i-c)/N$. Then $y_1<\cdots<y_k$ is a monochromatic $\{1,2,\dots,N-1\}$-diffsequence for the induced coloring $y\mapsto \chi(c+Ny)$, which has period $T/N$. In each period, the $N-1$ consecutive residues
$$c(N-1),c(N-1)+1,\dots,c(N-1)+(N-2)$$ have color $N$. Since $T/N \geq N$, a color-$c$ $\{1,2,\dots,N-1\}$-diffsequence cannot enter or cross one of these blocks, so its length is bounded. Now suppose $\chi(x_1)=N$. For each $i$, let $s_i=x_i\bmod T$, with residues taken in $\{0,1,\dots,T-1\}$. Then $s_i\in B$. Write $$x_{i+1}-x_i=t_ip_i,$$ where $1\leq t_i\leq N-1$ and $p_i\in\mathbb P$. Then $$s_{i+1}-s_{i}\equiv x_{i+1}-x_i\equiv t_ip_i\pmod T.$$ We first claim that $s_{i+1}-s_{i}\neq 0$. If $s_{i+1}-s_{i}=0$, then $T\mid t_ip_i$. Since $t_i\leq N-1\leq s$, we have $t_i\mid s!$, and hence $t_i\mid T$. Therefore $$\frac{T}{t_i}\mid p_i.$$ This is impossible since $T/t_i=N^2s!/t_i>1$ is composite, and hence $s_{i+1}-s_{i}\neq 0$. By construction, every nonzero difference between two elements of $B$ has absolute value at least $N$. Thus $$N\leq |s_{i+1}-s_{i}|\leq s.$$ Hence $|s_{i+1}-s_{i}|\mid s!$, so $|s_{i+1}-s_{i}|\mid T$. Since $s_{i+1}-s_i\equiv t_ip_i\pmod T$, it follows that $|s_{i+1}-s_i|\mid t_ip_i$. Since $|s_{i+1}-s_i|>t_i$, we must have $p_i\mid |s_{i+1}-s_i|$, and therefore $$\left|\frac{t_ip_i}{s_{i+1}-s_{i}}\right|\leq t_i\leq N-1.$$ Write $$t_ip_i=s_{i+1}-s_{i}+m_iT$$ for some $m_i\in\mathbb Z$. If $m_i\neq 0$, then $$ \left|\frac{t_ip_i}{s_{i+1}-s_{i}}\right| = \left|1+m_i\frac{T}{s_{i+1}-s_{i}}\right| \geq \frac{T}{|s_{i+1}-s_{i}|}-1. $$ Since $|s_{i+1}-s_{i}|\leq s$, we have $$ \frac{T}{|s_{i+1}-s_{i}|}-1\geq \frac{T}{s}-1>N-1, $$ contradicting the previous bound. Hence $m_i=0$, so $$s_{i+1}-s_{i}=t_ip_i>0.$$ Thus $$s_1<s_2<\cdots<s_k.$$ Since each $s_i$ lies in the finite set $B$, we have $k\leq |B|$. Hence there are no arbitrarily long monochromatic $D_N$-diffsequences of color $N$. Together with the boundedness already proved for the colors $0,\dots,N-1$, this shows that $\chi$ does not admit arbitrarily long monochromatic $D_N$-diffsequences. Hence $D_N$ is not $(N+1)$-accessible. Combining this with our lower bound, we find $$\operatorname{doa}\left(\bigcup_{t=1}^{N-1}t\mathbb{P}\right)=N,$$ as desired. \end{proof}

With Theorem \ref{thm:D}, Theorem \ref{thm:primes} follows fairly quickly.

\begin{proof}[Proof of Theorem \ref{thm:primes}]

Note that $$\bigcup_{t=1}^{2^n-1}t\mathbb{P}\subseteq \mathbb P^{(n)}.$$ For the upper bound, consider the following $(2^n+1)$-coloring $\chi : \mathbb{N} \to \{0,\dots,2^n\}$ defined by $$\chi(x)=\begin{cases}
    x\bmod 2^n & : 3^{n+1} \nmid x, \\
    2^n & : 3^{n+1} \mid x.
\end{cases}$$ This coloring is a generalization of the $3$-coloring used by Landman and Robertson in \cite{landman2007avoiding} to prove the primes are not $3$-accessible. We claim that there are no monochromatic $3^{n+1}$-term $\mathbb{P}^{(n)}$-diffsequences under this coloring. Indeed, suppose $$x_1<x_2<\cdots<x_{3^{n+1}}$$ is such a monochromatic $\mathbb{P}^{(n)}$-diffsequence. We cannot have $\chi(x_1)=2^n$ or else $3^{n+1} \mid x_{2}-x_1$, but $x_{2}-x_1 \in \mathbb{P}^{(n)}$, which contains no multiple of $3^{n+1}$. Finally, if $\chi(x_1)=c$ for some $c \in \{0,\dots,2^n-1\}$, then $$x_{i+1}-x_{i} \equiv 0 \bmod{2^n}$$ for all $1 \leq i \leq 3^{n+1}-1$. Since $2^n$ is the only multiple of $2^n$ in $\mathbb{P}^{(n)}$, it follows that $$x_{i}=x_1+(i-1)2^n.$$ Since $\gcd(2^n,3^{n+1})=1$, the residues of $$x_1,x_2,\dots,x_{3^{n+1}}$$ form a complete residue system modulo $3^{n+1}$, and hence one term has color $2^n$, a contradiction. Hence, $\chi$ admits no monochromatic $3^{n+1}$-term $\mathbb{P}^{(n)}$-diffsequences, so $\mathbb{P}^{(n)}$ is not $(2^n+1)$-accessible, and it follows that $$\operatorname{doa}(\mathbb{P}^{(n)})=2^n,$$ as desired. \end{proof}

\begin{question}[\cite{landman2007avoiding,landman2010avoiding}]

Landman and Robertson proved that $\mathbb{P}+c$ is $2$-accessible for every odd $c \in \mathbb{N}$. It would be nice to determine whether $\mathbb{P}+c$ is $2$-accessible for any even $c \in \mathbb{N}$. 
    
\end{question}

\begin{question}

Recall that for $D \subseteq \mathbb{N}$, we defined $$D^{(n)}=\{d_1\cdots d_k : 1 \leq k \leq n \text{ and } d_1,\dots,d_k \in D\}.$$ For which sets $D$ does one have $$\operatorname{doa}(D^{(n)})=(\operatorname{doa}(D))^n$$ for every $n\geq 1$?
    
\end{question}

\vskip20pt\noindent {\bf Acknowledgments.} At the beginning of this project, a large language model suggested considering the shifted-prime recurrence theorem of Frantzikinakis, Host, and Kra \cite{frantzikinakis2013polynomial} in connection with the Landman--Robertson question on the $2$-accessibility of the primes. A large language model was also used for suggestions on improving the exposition of the paper.

\end{document}